\title{Observations on cycles in a variant of the Collatz Graph}
\author[1]{Q Le}
\author[1]{Edward Smith}
\affil[1]{King's College London School of Mathematics}
\keywords{Collatz Conjecture, Graph Theory, 3x+1 Problem, Unsolved Problem, Cycle, Unique Factorisation Monoids, Experimental Mathematics, General Mathematics}
\begin{abstract}
It is well known that the Collatz Conjecture can be reinterpreted as the Collatz Graph with root vertex 1, asking whether all positive integers are within the tree generated. It is further known that any cycle in the Collatz Graph can be represented as a tuple, given that inputting them into a function outputs an odd positive integer; yet, it is an open question as to whether there exist any tuples not of the form $(2,2,...,2)$, thus disproving the Collatz Conjecture. In this paper, we explore a variant of the Collatz Graph, which allows the 3x+1 operation to be applied to both even and odd integers. We prove an analogous function for this variant, called the Loosened Collatz Function (LCF), and observe various properties of the LCF in relation to tuples and outputs. We prove a certain underlying unique factorisation monoid structure for tuples to the LCF and provide a geometric interpretation of satisfying tuples in higher dimensions. Research into this variant of the Collatz Graph may provide reason as to why there exist no cycles in the Collatz Graph.
\end{abstract}
\begin{document}

\flushbottom
\maketitle
\thispagestyle{empty}

\section{Introduction}
The Collatz function is defined as

$c(n) = \begin{Bmatrix}
\frac{n}{2} &\text{if}& n \equiv 0 \text{ (mod 2)}
\\
3n+1 &\text{if}& n \equiv 1 \text{ (mod 2)}
\end{Bmatrix}$\\
for $n  \in \mathbb{N}_1$. Let $(x, c(x), c^2(x),...)$ be the trajectory of $x$ under the Collatz function. The Collatz Conjecture asks whether, for all $x \in \mathbb{N}_1$, the trajectory of $x$ under the Collatz function yields $1$. Recent empirical evidence (\citeauthor{verification}) has tested the first $2^{68}$ starting values and no counterexample has been found as of yet. It is well known that this conjecture can be reformulated in graph-theoretic terms as the Collatz graph, a graph that is defined by the inverse relation

$C(n) = \begin{Bmatrix}
\{2n\} &\text{if}& n \equiv 0,1,2,3,5 \text{ (mod 6)}
\\
\{2n,\frac{n-1}{3}\} &\text{if}& n \equiv 4 \text{ (mod 6)}
\end{Bmatrix}$\\
for $n \in \mathbb{N}_1$, where the vertices of the graph are positive integers. Thus, the Collatz Conjecture is reframed to the question of whether the Collatz graph is a tree containing all positive integers with root vertex 1.

We define $f,g: \mathbb{N}_1 \to \mathbb{N}_1$ to be the two mappings in the Collatz function that connect one vertex to another, $f:n\to \frac{n}{2}$ and $g:n \to 3n+1$. We define cycles to be simple directed circuits - we will specify when we refer to the more general directed circuit. We also say that $f$ or $g$ is applied in a cycle if $f$ or $g$ is used on one vertex to connect with another. If the Collatz graph contains cycles aside from the trivial $\dots\xrightarrow{f}1 \xrightarrow{g} 4 \xrightarrow{f} 2 \xrightarrow{f}\dots$, then the Collatz Conjecture is false, since there then exists a trajectory that never yields 1.

It was proven (\citeauthor{cycleproofs}) that, for a cycle to exist in the Collatz graph, there must exist a unique tuple $Y = (y_1,\dots,y_n)$, where each entry represents the number of times you apply $f$ after applying $g$ in the cycle. Furthermore, for such a tuple $Y$, it represents a cycle in the Collatz graph if and only if the following function, which we call the Collatz Function, outputs an odd number that has $g$ applied to it in the cycle represented:

$F(Y) = \frac{\sum_{i=1}^{n}3^{n-i}\cdot 2^{\sum_{j=0}^{i-1}y_j}}{2^{\sum_{i=1}^{n}y_i}-3^n}$\\
where $y_0 = 0$. It is important to note that:
\begin{enumerate}
    \item each entry of $Y$ must be greater than 0, since having 0 means you applied $g$ twice consecutively, once on an odd integer, once on an even integer, which is not permitted by the Collatz function.
    \item $y_0=0$ is required for the function to work and should not be seen as an entry for the tuple, since $y_0$ always equals as $0$ regardless of the inputted tuple.
\end{enumerate}
We say that a tuple is an $n$-tuple if it has $n$  entries and a cycle is an $n$-cycle if, within the cycle, $g$ is applied $n$ times. For example, the trivial cycle $\dots\xrightarrow{f}1 \xrightarrow{g} 4 \xrightarrow{f} 2 \xrightarrow{f}\dots$ can be represented by the 1-tuple (2) since the output of the function is

$F((2)) = \frac{1}{2^{2} - 3} = 1$. \\
and $g$ is applied one time in that cycle. In fact, the $n$-tuple (2,2,...,2) represents the trivial cycle, repeated $n$ times. This can be seen by investigating what happens when all entries of the tuple $(a,\dots,a)$ are equal.

Many variants of the Collatz function has been researched, from extending the domain to include integer, rational, real, and complex inputs to generalising the mappings in the Collatz function. In this paper, we investigate a modified Collatz graph called the Loosened Collatz graph, which is defined by the relation

$L(n) = \begin{Bmatrix}
\{2n\} &\text{if}& n \equiv 0,2 \space \text{ (mod 3)}
\\
\{2n,\frac{n-1}{3}\} &\text{if}& n \equiv 1 \space \text{ (mod 3)}
\end{Bmatrix}$ \\
for $n \in \mathbb{N}_1$, with the exception that $L(1)=2$. This is equivalent to modifying the Collatz function into the graph generating relation

$l(n) = \begin{Bmatrix}
\{\frac{n}{2},3n+1\} &\text{if}& n \equiv 0 \space \text{ (mod 2)}
\\
\{3n+1\} &\text{if}& n \equiv 1 \space \text{ (mod 2)}
\end{Bmatrix}$ \\
for $n \in \mathbb{N}_1$. This can be observed by noting that, for all $x \equiv 1 \text{ (mod 3)}$, $\frac{x-1}{3}$ is either even or odd and for all $y \in \mathbb{N}_1$, one can easily construct $3y+1 \equiv 1 \text{ (mod 3)}$. 

From this, we can see that the Collatz Function changes too, by extending the domain of the entries of the tuple to include 0. Thus, for a cycle to exist in the Loosened Collatz Graph, there must exist a unique tuple $Y = (y_1,\dots,y_n)$, where each entry of the tuple represents the number of times you apply $f$ after applying $g$ in the cycle, which may now be 0. Further, for such a tuple $Y$, it represents a cycle in the Collatz graph if and only if the following function, which we call the Loosened Collatz Function (LCF), outputs an integer, even or odd, that has $g$ applied to it in the cycle represented:

$F_L(Y) = \frac{\sum_{i=1}^{n}3^{n-i}\cdot 2^{\sum_{j=0}^{i-1}y_j}}{2^{\sum_{i=1}^{n}y_i}-3^n}$\\
where $y_0 = 0$. From this, we say that a tuple satisfies the LCF if:
\begin{enumerate}
    \item Each entry of the tuple is a non-negative integer.
    \item Inputting the tuple into the LCF outputs a positive integer.
\end{enumerate}
Unlike the original Collatz graph, the Loosened Collatz graph has many cycles that satisfy the LCF. A trivial case is the circuit

$\dots\xrightarrow{f}4 \xrightarrow{g} 13 \xrightarrow{g} 40 \xrightarrow{f} 20 \xrightarrow{f} 10 \xrightarrow{f} 5 \xrightarrow{g} 16 \xrightarrow{f} 8 \xrightarrow{f} 4 \dots$\\ 
which can be represented by the tuple $(0,3,2)$,$(2,0,3)$ and $(3,2,0)$. 

This paper investigates the Loosened Collatz Graph and makes several observations and conjectures regarding it. Initially, we present a derivation of the LCF from the Loosened Collatz Graph relation. Then, we prove the satisfaction of the LCF of bitwise rotations of a tuple, given that a tuple satisfies the LCF, and vice versa. We then look at the underlying monoid structure of satisfying tuples and interpret the tuples as coordinates in higher dimensional spaces, with the hopes that they are useful for other researchers. We conclude with a summary of our research and further research questions to explore.
\newpage
\section{Cycles in the Loosened Collatz Graph}

The Loosened Collatz graph is the graph defined by the relation

$L(n) = \begin{Bmatrix}
\{2n\} &\text{if}& n \equiv 0,2 \space \text{ (mod 3)}
\\
\{2n,\frac{n-1}{3}\} &\text{if}& n \equiv 1 \space \text{ (mod 3)}
\end{Bmatrix}$ \\
for $n \in \mathbb{N}_1$, with the exception that $L(1) = 2$ to avoid having $0$ as a vertex of the graph. We define $f,g$ as in the introduction. By inversing the relation, we have the graph generating relation

$l(n) = \begin{Bmatrix}
\{\frac{n}{2},3n+1\} &\text{if}& n \equiv 0 \space \text{(mod 2)}
\\
\{3n+1\} &\text{if}& n \equiv 1 \space \text{(mod 2)}
\end{Bmatrix}$\\
for $n \in \mathbb{N}_1$. From this, we can obtain the Loosened Collatz Function (LCF).

\subsection{Proof of The Loosened Collatz Function}

\newtheorem{LCF}{Theorem}
\begin{LCF}
For a cycle to exist in the Loosened Collatz graph, its tuple must satisfy the function

$F_L(Y) = \frac{\sum_{i=1}^{n}3^{n-i}\cdot 2^{\sum_{j=0}^{i-1}y_j}}{2^{\sum_{i=1}^{n}y_i}-3^n}$ \\
where $y_0 = 0$, each entry of the tuple $(y_1,\dots,y_n)$ corresponds to the number of times you apply $f$ after applying $g$ in the cycle, including 0, and the output is one of the positive integers in that cycle, which has $g$ applied to it.
\end{LCF}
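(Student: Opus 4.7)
The plan is to mimic the classical derivation of the Collatz cycle formula, with the only modification being that the exponents $y_i$ are allowed to take the value $0$. Suppose a cycle exists in the Loosened Collatz graph and list, in cyclic order, the $n$ vertices $x_1, x_2, \dots, x_n$ on which $g$ is applied. Between $x_i$ and $x_{i+1}$ the cycle proceeds by one application of $g$ followed by $y_i$ applications of $f$, so the forward mappings $f : n \mapsto n/2$ and $g : n \mapsto 3n+1$ give the fundamental recursion
\[
2^{y_i}\, x_{i+1} \;=\; 3 x_i + 1, \qquad i = 1, 2, \dots, n,
\]
where indices are read modulo $n$. The point of the loosening is simply that $y_i = 0$ is now permitted (it corresponds to $g$ being applied to an even number, producing an odd result on which $g$ is then applied again); the recursion itself is unchanged.

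Next I would unroll this recursion by induction on $i$. A straightforward induction shows
\[
x_{i+1} \;=\; \frac{3^{i}\, x_1 \;+\; \sum_{k=1}^{i} 3^{\,i-k}\, 2^{\sum_{j=0}^{k-1} y_j}}{2^{\sum_{j=1}^{i} y_j}},
\]
using the convention $y_0 = 0$ so that the $k = 1$ term in the sum is $3^{i-1}$, matching the base case $x_2 = (3x_1 + 1)/2^{y_1}$. The cycle condition then reads $x_{n+1} = x_1$, i.e.\ $2^{\sum_{j=1}^{n} y_j}\, x_1 = 3^n x_1 + \sum_{i=1}^{n} 3^{n-i}\, 2^{\sum_{j=0}^{i-1} y_j}$. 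Rearranging and dividing by $2^{\sum y_i} - 3^n$ (which is nonzero since the left side and the sum on the right are positive integers) yields exactly
\[
x_1 \;=\; \frac{\sum_{i=1}^{n} 3^{n-i}\, 2^{\sum_{j=0}^{i-1} y_j}}{2^{\sum_{i=1}^{n} y_i} - 3^n} \;=\; F_L(Y),
\]
and $x_1$ is by construction a vertex of the cycle to which $g$ is applied.

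I expect the main obstacle to be purely bookkeeping rather than conceptual: keeping the partial sums $\sum_{j=0}^{i-1} y_j$ aligned correctly during the induction, and being explicit about why the $y_0 = 0$ convention is a notational device (so that the sum indexed from $j=0$ coincides with the genuine exponent of $2$ in the denominator of $x_{i+1}$). A secondary subtlety worth flagging, but not proving here, is that the theorem as stated is a necessary condition for a cycle; showing that $F_L(Y) \in \mathbb{N}_1$ is also sufficient requires observing that once $x_1$ is an integer, the entire sequence $x_2, \dots, x_n$ generated by the recursion consists of integers, and each $x_i$ admits $g$ as a legal move in the loosened graph (which is automatic since $g$ is now defined on every positive integer). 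This last point is precisely what fails in the classical Collatz setting and is the reason the loosened variant has many more cycles.
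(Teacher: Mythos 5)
Your proposal is correct and follows essentially the same route as the paper: the same recursion $2^{y_i}x_{i+1} = 3x_i+1$, the same inductive unrolling (your closed form for $x_{i+1}$ is exactly the paper's Lemma 1 with the index shifted), and the same rearrangement of the cycle condition $x_{n+1}=x_1$ to isolate $x_1 = F_L(Y)$. Your added remarks on why the denominator $2^{\sum y_i}-3^n$ is nonzero and on the sufficiency direction are points the paper glosses over, but they do not change the argument.
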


Although an analogous theorem has been proven true for the original Collatz graph, it will be useful to prove it in this context, since some constructions within the proof will be used later in the paper.

\begin{proof}
Assume there exists an $n$-cycle, in which $g$ is applied $n$-times. Let $x_1$ denote one of the numbers in the cycle to which $g$ is applied. After $g$ is applied onto $x_1$, there exist $y_1$ applications of $f$ before applying $g$ again. Let $x_2 =f^{y_1}(g(x_1))$ be the number before applying $g$ again. After $g$ is applied onto $x_2$, there exist $y_2$ applications of $f$ before applying $g$ again. Let $x_3 = f^{y_2}(g(x_2))$ be the number before applying $g$ again. We may repeat this process until we obtain $x_1 = f^{y_n}(g(x_n))$. It is clear to see that $y_1,\dots,y_n$ are positive integers. So, we obtain the family of equations

$x_2 = f^{y_1}(g(x_1))$

$x_3 = f^{y_2}(g(x_2))$

$\vdots$

$x_n = f^{y_{n-1}}(g(x_{n-1})$

$x_1 = f^{y_n}(g(x_n))$ \\
Notice that,

$x_2 = \frac{3x_{1}+1}{2^{y_1}}$

$x_3 = \frac{3^2x_{1}+2^{y_1}+3}{2^{y_1+y_2}}$

$x_4 = \frac{3^3x_{1}+3\cdot2^{y_1}+2^{y_1+y_2}+3^2}{2^{y_1+y_2+y_3}}$\\
To generalise the values in the numerator and denominator, we shall prove the following lemma:

\newtheorem{MiniCycleLemma}{Lemma}
\begin{MiniCycleLemma}
for $1\leq k \leq n$, $x_k = \frac{3^{k-1}x_{1}+\sum_{i=1}^{k-1}3^{k-i-1}\cdot 2^{\sum_{j=0}^{i-1}y_j}}{2^{\sum_{i=1}^{k-1}y_i}}$, where $y_0=0$.
\end{MiniCycleLemma}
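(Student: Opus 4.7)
The plan is to prove the lemma by induction on $k$, using the recurrence $x_{k+1} = f^{y_k}(g(x_k)) = \frac{3 x_k + 1}{2^{y_k}}$, which is just the definition of $x_{k+1}$ unpacked.

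For the base case $k=1$, I would interpret the empty sum in the numerator as $0$ and the empty sum in the exponent of the denominator as $0$, so the formula collapses to $x_1 = \frac{3^0 x_1 + 0}{2^0} = x_1$. (The same convention makes the preliminary expressions for $x_2, x_3, x_4$ displayed in the proof special cases of the lemma, which is a useful sanity check.)

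For the inductive step, I would assume the stated expression holds for $x_k$, substitute it into $x_{k+1} = \frac{3 x_k + 1}{2^{y_k}}$, multiply through, and combine exponents. Multiplying the numerator of $x_k$ by $3$ turns $3^{k-1} x_1$ into $3^k x_1$ and shifts each $3^{k-i-1}$ to $3^{k-i}$; combining the new factor $2^{y_k}$ with the existing $2^{\sum_{i=1}^{k-1} y_i}$ in the denominator yields $2^{\sum_{i=1}^{k} y_i}$, matching what the lemma predicts for index $k+1$. The only genuinely delicate step is accounting for the extra $+1$ in $3 x_k + 1$: after pulling it over a common denominator it contributes a term $2^{\sum_{i=1}^{k-1} y_i}$ in the numerator, and I would observe that, because $y_0 = 0$, this equals $3^{k-k} \cdot 2^{\sum_{j=0}^{k-1} y_j}$, which is exactly the missing $i=k$ summand needed to extend the sum in the numerator from $i=1,\dots,k-1$ to $i=1,\dots,k$. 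This absorption is the only subtle point; the rest is routine index tracking.

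The main obstacle, then, is purely notational: keeping the three nested index variables $i$, $j$, and the outer $k$ straight, and recognising that the $+1$ term fits precisely into the new top index of the sum thanks to the $y_0 = 0$ convention. Once that absorption is spelled out, the induction closes immediately and the lemma is proved, after which a single further application (setting $k = n$ and using $x_1 = f^{y_n}(g(x_n)) = \frac{3 x_n + 1}{2^{y_n}}$, equivalently extending the lemma one more step and imposing $x_{n+1} = x_1$) will yield the LCF by solving the resulting linear equation in $x_1$.
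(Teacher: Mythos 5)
Your proposal is correct and follows essentially the same route as the paper: induction on $k$ via the recurrence $x_{k+1} = \frac{3x_k+1}{2^{y_k}}$, with the key step being the absorption of the $+1$ term as the $i=k$ summand $3^{k-k}\cdot 2^{\sum_{j=0}^{k-1}y_j}$ using the $y_0=0$ convention. The only (immaterial) difference is that you anchor the induction at $k=1$ with empty-sum conventions, whereas the paper starts from the explicitly computed cases $k=2,3$.
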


\begin{proof}
We prove this statement by induction. For base cases 2 and 3, the claim is true. Assume that the claim is true for $k=z$, for $1\leq z<n$. So,

$x_z = \frac{3^{z-1}x_{1}+\sum_{i=1}^{z-1}3^{z-i-1}\cdot 2^{\sum_{j=0}^{i-1}y_j}}{2^{\sum_{i=1}^{z-1}y_i}}$.\\
We may consider the case for $k=z+1$.  Since $x_{z+1}=f^{z}(g(x_{z}))$, by performing $g$ and $f$, we obtain

$x_{z+1}= \frac{3(\frac{3^{z-1}x_{1}+\sum_{i=1}^{z-1}3^{z-i-1}\cdot 2^{\sum_{j=0}^{i-1}y_j}}{2^{\sum_{i=1}^{z-1}y_i}})+1}{2^{y_z}}$.\\
Simplifying the fraction, we get

$x_{z+1} = \frac{3^{z}x_{1}+\sum_{i=1}^{z-1}3^{z-i}\cdot 2^{\sum_{j=0}^{i-1}y_j}+2^{\sum_{i=1}^{z-1}y_i}}{2^{\sum_{i=1}^{z-1}y_i+y_z}}$.\\
We may allow $3^{z-z}$ to multiply $2^{\sum_{i=1}^{z-1}y_i}$, since this does not change the overall result.

$x_{z+1} = \frac{3^{z}x_{1}+\sum_{i=1}^{z-1}3^{z-i}\cdot 2^{\sum_{j=0}^{i-1}y_j}+3^{z-z}\cdot2^{\sum_{i=1}^{z-1}y_i}}{2^{\sum_{i=1}^{z-1}y_i+y_z}}$.\\
Observe that $3^{z-z}\cdot2^{\sum_{i=1}^{z-1}y_i}$ would be the term generated by the sum $\sum_{i=1}^{z}3^{z-i}\cdot 2^{\sum_{j=0}^{i-1}y_j}$ for $i=z$. From this, we may write

$x_{z+1} = \frac{3^{z}x_{1}+\sum_{i=1}^{z}3^{z-i}\cdot 2^{\sum_{j=0}^{i-1}y_j}}{2^{\sum_{i=1}^{z}y_i}}$,\\ which makes the claim true for $k=z+1$. By the principle of induction, we have proven the claim true for all $k\in\mathbb{N}_1$.
\end{proof}
\noindent So, by Lemma 1, we have proven that

$x_n = \frac{3^{n-1}x_{1}+\sum_{i=1}^{n-1}3^{n-i-1}\cdot 2^{\sum_{j=0}^{i-1}y_j}}{2^{\sum_{i=1}^{n-1}y_i}}$, \\
and, if $x_1=f^{y_n}(g(x_n))$, then
$x_{1} = \frac{3^{n}x_{1}+\sum_{i=1}^{n}3^{n-i}\cdot 2^{\sum_{j=0}^{i-1}y_j}}{2^{\sum_{i=1}^{n}y_i}}$. Rearranging for $x_1$, we get

$x_{1} = \frac{\sum_{i=1}^{n}3^{n-i}\cdot 2^{\sum_{j=0}^{i-1}y_j}}{2^{\sum_{i=1}^{n}y_i}-3^{n}}$.\\
Since this is true for any number in a cycle on which $g$ is applied, we may allow the equation to become the function
$F_L(Y) = \frac{\sum_{i=1}^{n}3^{n-i}\cdot 2^{\sum_{j=0}^{i-1}y_j}}{2^{\sum_{i=1}^{n}y_i}-3^{n}}$, 
where $y_0 = 0$ regardless of the tuple $Y$, each entry of the tuple $(y_1,\dots,y_n)$ corresponds to the number of times you apply $f$ after applying $g$, including 0, and the output is one of the positive integers in that cycle, which has $g$ applied to it.
\end{proof}

Theorem 1 shows that we have extended the original Collatz Function to allow 0s as entries of the tuple when considering the Loosened Collatz graph.

A natural question to ask is whether there exists an algorithm to find tuples that satisfy the LCF. We provide a partial answer to this - there do exist two methods for finding cycles in the graph, which lend themselves to finding satisfying tuples. However, there is no guarantee that they will always find cycles.

The first method is to start from 1, 2, 8, or 16 and iterate $g$ on it a number of times before applying the normal Collatz Function to the number generated. Assuming the Collatz Conjecture is true, this will always result in a cycle since all trajectories must yield 1, 2, 8, and 16, which can be seen by observing the original Collatz graph. The second method is to systematically search through every possible tuple combination of every possible length.

Tuples found by the first method are called trivial tuples since all cycles that these tuples represent found must include 1, 2, 8, or 16 and the method used is relatively simple. Applying the same method to other numbers does not always yield the same result. So, tuples found only by the second method are called non-trivial tuples.

Now, we may state the following conjecture:

\newtheorem{LCC}{Conjecture}
\begin{LCC} For all $n \neq 0 \text{ (mod 3)} \in \mathbb{N}_1$, $n$ is a vertex of a cycle.
\end{LCC}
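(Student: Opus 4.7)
The conjecture has two halves. The easy half---that no $n\equiv 0\pmod 3$ can appear as a vertex of any cycle---follows directly from the derivation of Theorem~1: every vertex of a cycle has the form $(3x_i+1)/2^{j}$ for some $i$ and $0\le j\le y_i$, and since $3x_i+1\equiv 1\pmod 3$ and $2^{-j}\equiv(-1)^{j}\pmod 3$, every such vertex is $\equiv\pm 1\pmod 3$. This rules out multiples of $3$ immediately, so the real content is the forward direction: every $n\not\equiv 0\pmod 3$ is a vertex of some cycle.

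My plan is to reduce the forward direction to a constructive existence question about $F_L$. Specifically, it would suffice to show that every $n\not\equiv 0\pmod 3$ is realised as $F_L(Y)$ for some satisfying tuple $Y$, since then $n=x_1$ in the cycle represented by $Y$. This reduction is strictly stronger than the conjecture (an $n$ in a cycle could a priori appear only as an $f$-chain vertex), but the stronger form is plausible: $n=2$ appears in the trivial cycle only as an $f$-chain vertex, yet it is also realised as $F_L((0,1,1,2,3,3))=590/295=2$, sitting on a longer $6$-cycle. Equivalently, starting from $x_1=n$ one wants to choose admissible $y_i\in\{0,1,\dots,v_2(3x_i+1)\}$ with $x_{i+1}=(3x_i+1)/2^{y_i}$ so that $x_{k+1}=n$ for some $k\ge 1$.

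Two constructive strategies come to mind. The \emph{one-burst} family $(0,\dots,0,y)$ with $k-1$ leading zeros specialises $F_L$ to $(3^k-1)/(2(2^y-3^k))$, so $F_L=n$ collapses to $3^k(2n+1)=2^{y+1}n+1$; this forces $2^{y}\equiv 1\pmod{2n+1}$ and then requires the quotient to be a pure power of $3$. The family handles only exceptional $n$ but makes the residue obstructions explicit. The \emph{patchwork} strategy uses general tuples and exploits the many degrees of freedom in the $y_i$'s: the heuristic is that the image of $F_L$ on tuples with bounded total run $S_k\le T$ grows rapidly enough in $T$ that a pigeonhole argument in a modulus capturing both $2$-adic and $3$-adic data eventually covers every non-multiple of $3$.

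The main obstacle is exactly where these strategies stop being routine. The governing equation
\[
n\bigl(2^{S_k}-3^{k}\bigr)=\sum_{i=1}^{k}3^{k-i}\,2^{S_{i-1}},\qquad S_i=\sum_{j=1}^{i}y_j,
\]
is exponential in the unknown $y_i$'s, and extracting a non-negative integer solution for every target $n$ is a reverse Collatz-reachability statement in the same family as other notoriously hard $3x+1$ problems. A realistic intermediate plan is to prove the conjecture for $n$ in prescribed arithmetic progressions modulo small prime powers by exhibiting explicit closing tuples, and independently to prove a density version---``all but a sparse set of $n\not\equiv 0\pmod 3$ lie in some cycle''---by counting satisfying tuples against the integers they can cover. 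A full unconditional proof may well require techniques outside elementary number theory, such as bounds on linear forms in logarithms to preclude exceptional coincidences, or ergodic methods on the $2$-adic completion of the LCF dynamics.
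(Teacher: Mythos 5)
The statement you were asked to prove is labelled a \emph{Conjecture} in the paper, and the paper offers no proof of it: the only argument given is a one\-/sentence remark that multiples of $3$ cannot lie on a cycle (since $g$ never produces one). So there is no proof to compare yours against, and your proposal, as you candidly say yourself, is not a proof either. The parts you do assert are correct. Your treatment of the negative half is sound and cleaner than the paper's remark: every vertex of a cycle has the form $(3x_i+1)/2^{j}$, hence is $\equiv \pm 1 \pmod 3$. Your supporting computations also check out --- $F_L((0,1,1,2,3,3))=590/295=2$ does correspond to the cycle $2\to 7\to 22\to 11\to 34\to 17\to 52\to 26\to 13\to 40\to 20\to 10\to 5\to 16\to 8\to 4\to 2$, and the one\-/burst reduction to $3^{k}(2n+1)=2^{y+1}n+1$, forcing $2^{y}\equiv 1 \pmod{2n+1}$, is algebraically right. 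You are also correct to flag that realising $n$ as a value of $F_L$ is strictly stronger than $n$ merely being a vertex, since a vertex may only ever have $f$ applied to it within its cycle.

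The genuine gap is the entire forward direction, which is the whole content of the conjecture: nothing in your proposal produces, for a general $n\not\equiv 0\pmod 3$, a tuple that closes a cycle through $n$, and neither of your strategies is shown to terminate. The one\-/burst family requires $(2^{y+1}n+1)/(2n+1)$ to be an exact power of $3$, which fails for almost every $n$, and the pigeonhole/density heuristic for general tuples is a hope rather than an argument --- the congruence conditions modulo powers of $2$ and $3$ interact with the exponential Diophantine closing equation in precisely the way that makes $3x+1$\-/type reachability problems intractable. This is not a criticism of your honesty so much as a confirmation that the statement is genuinely open; as a proof of the stated claim, the submission establishes only the direction the paper already disposes of in one line.
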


Clearly, multiples of 3 cannot be in cycles by observing that, when applying $g$, all integers $x$ go to $3x+1$, which is another integer.

\subsection{Proof of Bitwise Satisfaction}
Define $R:(a_1,a_2,\dots,a_{n-1},a_n) \to (a_2,a_3\dots,a_n,a_1),\mathbb{N}_0^n \to \mathbb{N}_0^n$ to be a left bit-wise rotation on an $n$-tuple. 

\newtheorem{Satisfied Tuples}{Theorem}
\begin{LCF}
An n-tuple $X = (x_1,\dots,x_n)$ satisfies the LCF if and only if all bitwise rotations on that tuple also satisfy the LCF.
\end{LCF}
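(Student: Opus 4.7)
The plan is to exploit the invariance of the denominator $D = 2^{y_1 + \cdots + y_n} - 3^n$ under cyclic rotation, and to derive a single algebraic identity linking $F_L(X)$ and $F_L(R(X))$ that mirrors one round of the loosened Collatz map. Write the numerator of $F_L(Y)$ as $N(Y) = \sum_{i=1}^n 3^{n-i}\, 2^{S_i(Y)}$, where $S_i(Y) = \sum_{j=0}^{i-1} y_j$ are the usual partial sums with $y_0 = 0$.

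First, I would verify that for $R(X) = (y_2, \ldots, y_n, y_1)$ the new partial sums satisfy $S_i(R(X)) = S_{i+1}(X) - y_1$ for all $1 \leq i \leq n$. Substituting this into $N(R(X))$, shifting the summation index by one, and peeling off the resulting boundary term $2^{S_{n+1}(X)} = 2^{\sum y_i}$ yields the key identity
\[
2^{y_1}\, N(R(X)) \;=\; 3 N(X) + D.
\]
Dividing through by $D$ gives the clean recursion $F_L(R(X)) = (3 F_L(X) + 1)/2^{y_1}$, which is precisely \emph{apply $g$, then apply $f$ exactly $y_1$ times} to $F_L(X)$, matching the cycle interpretation of Theorem 1.

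Next, I would show that satisfaction is preserved by $R$. The denominator $D$ is odd (since $3^n$ is odd, and $D > 0$ forces $\sum y_i \geq 1$), so $\gcd(D, 2^{y_1}) = 1$. Assuming $X$ satisfies the LCF, $D \mid N(X)$, and hence $D \mid 2^{y_1} N(R(X))$ by the identity; coprimality upgrades this to $D \mid N(R(X))$. Since $N(R(X)) > 0$ and $D > 0$, $F_L(R(X))$ is a positive integer, and non-negativity of the entries of $R(X)$ is automatic, so $R(X)$ satisfies the LCF. Iterating, every cyclic rotation $R^k(X)$ then satisfies the LCF.

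For the converse, if some rotation $R^k(X)$ satisfies the LCF, then applying the forward step $n-k$ further times (using $R^n = \mathrm{id}$) shows that $X$ itself satisfies the LCF, completing the biconditional. The step I expect to be most delicate is the partial-sum manipulation establishing $2^{y_1} N(R(X)) = 3 N(X) + D$---specifically, tracking how the phantom term $y_0 = 0$ is re-attached after rotation and how the boundary term at $i = n+1$ feeds back into the $-3^n$ contribution in $D$---but once the identity is in hand, the oddness of $D$ dispatches the divisibility question immediately.
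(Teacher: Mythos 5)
Your proof is correct, and it takes a genuinely different route from the paper. The paper argues by contradiction through the orbit interpretation: it assumes a ``cycle'' could contain both integer and non-integer vertices to which $g$ is applied, observes that any non-integer arising must have the form $\frac{c}{2^d}$ with $2^d \nmid c$, and shows this form is preserved by both $f$ and $g$, so the orbit can never return to the integers. That argument leans on the identification of $F_L(R^k(X))$ with the successive vertices of the circuit, which is inherited informally from Theorem 1 rather than established algebraically. You instead prove the identity $2^{y_1} N(R(X)) = 3N(X) + D$ directly from the partial-sum reindexing (I checked it: the $i=n+1$ boundary term $2^{\sum y_i}$ combines with the $-3^n$ released from the $i=1$ term of $3N(X)$ to produce exactly $D$), which makes the relation $F_L(R(X)) = f^{y_1}(g(F_L(X)))$ a theorem rather than an appeal to the cycle picture, and then you settle integrality by coprimality of the rotation-invariant odd denominator $D$ with $2^{y_1}$. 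The two approaches exploit the same underlying fact --- the only denominators that can appear are powers of two, while $D$ is odd --- but yours is self-contained, avoids the implicit circularity in invoking the cycle interpretation, and handles the converse cleanly via $R^n = \mathrm{id}$. One cosmetic note: your claim that $D>0$ forces $\sum y_i \geq 1$ is what you need for oddness of $D$, and it holds a fortiori since $2^{\sum y_i} > 3^n$ actually forces $\sum y_i > n\log_2 3$.
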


Recall that, for $X$ to satisfy the LCF, $F_L(X)$ must be a positive integer that has $g$ applied to it in the cycle. Furthermore, recall that, by Theorem 1, for an $n$-cycle, $y_k$ is defined as the number of applications of $f$ on a number $x_k$ before applying $g$ again to produce a new number $x_{k+1}$. Since the sequence $x_1,\dots,x_n,x_1,\dots$ is periodic, so too is the sequence $y_1,\dots,y_n,y_1,\dots$. We may leverage these two facts to prove this claim.

\begin{proof}
Assume the claim is false. Then, there exists a cycle, in which both integer and non-integer vertices have $g$ applied to them. 

So, there exists a pair of consecutive vertices in the cycle in the Loosened Collatz Graph, such that the first is an integer $x$ and the second is a non-integer. Since, for all $x \in \mathbb{N}_1$, $g(x) \in \mathbb{N}_1$, then the non-integer must be $f(x)$ or $\frac{x}{2}$. 

Let $a,b \in \mathbb{N}_1$ such that $\frac{a}{2^b} \not\in\mathbb{N}_1$. Applying $g$ produces $\frac{3a+2^b}{2^b}$, and since $2^b \nmid a$, $2^b \nmid 3a+2^b$ either. Thus, $\frac{3a+2^b}{2^b}$ is a non-integer of the form $\frac{c}{2^d}$, where $c,d \in \mathbb{N}_1$ such that $2^d \nmid c$.
Likewise, applying $f$ produces $\frac{a}{2^{b+1}}$, another non-integer of the form $\frac{c}{2^d}$, where $c,d \in \mathbb{N}_1$ such that $2^d \nmid c$.
Thus, all non-integer vertices in the cycle must be of the form $\frac{m}{2^n}$, for some $m,n \in \mathbb{N}_1$ such that $2^n \nmid m$.

However, since applying $g$ or $f$ on any non-integer of the form expressed above results in another non-integer of that form, it follows that there cannot exist integer vertices in the cycle after the initial non-integer. But this contradicts our initial assumption that there exists a cycle with both integer and non-integer vertices.

Thus, if $X$ satisfies the LCF, then so too do all bitwise rotations of it. Likewise, if all bitwise rotations of $X$ satisfy the LCF, then so too must $X$, since all outputs must all be either positive integers or positive non-integers.

\end{proof}

Theorem 2 shows that, for any $n$-tuple that we test with the LCF, we are effectively testing at most $n$ different tuples, which are the bitwise rotations of the $n$-tuple. So, Theorem 2 provides an easy way for processing which tuples have been tested, helping us with the later search for non-trivial tuples.

From this, we may state the following conjecture:

\newtheorem{Conjecture1}{Conjecture}
\begin{LCC}
Excluding the tuple $(2,2,...)$, a tuple satisfying the Loosened Collatz Function must include $0$ in its entry.
\end{LCC}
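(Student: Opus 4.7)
My plan is to reduce the statement to the cycles half of the classical Collatz Conjecture. First I would observe that if an $n$-tuple $Y = (y_1,\dots,y_n)$ with every entry $y_i \geq 1$ satisfies the LCF, then every vertex $x_i$ at which $g$ is applied in the corresponding cycle is in fact odd. This is because, $y_i \geq 1$ means $f$ is applied to $g(x_i) = 3x_i+1$ at least once, so $3x_i+1$ must be even, forcing $x_i$ to be odd. Invoking Theorem 2 to guarantee that every $x_i$ is a positive integer, the cycle therefore lives entirely inside the positive \emph{odd} integers at its $g$-vertices, and so is actually a cycle of the ordinary Collatz graph.

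Once this is established, the conjecture becomes equivalent to the assertion that the only cycle in the ordinary Collatz graph on $\mathbb{N}_1$ is the trivial $1 \to 4 \to 2 \to 1$, whose representing tuples are precisely $(2),(2,2),(2,2,2),\dots$ obtained by traversing the trivial cycle once or more. Any tuple with all entries $\geq 1$ satisfying the LCF and not of this form would supply a non-trivial Collatz cycle, and conversely any such cycle would produce such a tuple. Thus the conjecture is not merely implied by the Collatz Conjecture but is logically equivalent to its cycles part.

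The main obstacle is therefore exactly this: a complete proof would settle the no-cycles half of the Collatz Conjecture, an open problem that has resisted all attacks for decades. Consequently I would not expect a self-contained proof by elementary means. A reasonable line of attack, rather than re-deriving classical bounds, would be to exploit structural features of the Loosened graph not visible classically, for instance the unique-factorisation-monoid structure on satisfying tuples promised later in the paper: if one can show that within the all-positive-entries regime every satisfying tuple lies in the submonoid generated by $(2)$, the conjecture follows. As a complementary Diophantine route one might analyse the required divisibility $2^{\sum y_i} - 3^n \mid \sum_{i=1}^n 3^{n-i} 2^{\sum_{j=0}^{i-1} y_j}$, in the spirit of Steiner and Simons--de Weger, to squeeze the remaining $y_i$ values toward $2$; historically this machinery has produced very large lower bounds on hypothetical cycle lengths without eliminating them.

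In summary, I would present the reduction in two clean steps, parity of $x_i$ and identification with an ordinary Collatz cycle, flag the equivalence with a famous open problem as the genuine obstruction, and suggest that any self-contained argument must genuinely use the monoid structure of satisfying tuples rather than re-derive classical Diophantine estimates.
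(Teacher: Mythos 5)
This statement is stated in the paper as Conjecture~2, and the paper offers no proof of it; it only remarks on the one-way implication that the conjecture's truth would rule out non-trivial cycles in the ordinary Collatz graph. So there is no proof of the paper's to compare yours against, and you were right not to manufacture one. What you do supply is a correct and in fact sharper observation than the paper's: the conjecture is \emph{equivalent} to the no-non-trivial-cycles half of the Collatz Conjecture, not merely implied by it. Your reduction is sound. If every $y_i \geq 1$ then each $3x_i+1$ is even, so each $x_i$ is odd; the integrality of all vertices (which follows from the argument inside the paper's Theorem~2, showing a cycle cannot mix integer and non-integer vertices) forces every application of $f$ to land on an even integer; hence $g$ is applied exactly at odd vertices and $f$ exactly at even ones, which is the deterministic ordinary Collatz rule, so the cycle is an ordinary Collatz cycle. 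Combined with the computation $F_L((2,\dots,2))=1$ and the paper's Theorem~3 (uniqueness of the representing tuple up to rotation), a satisfying all-positive tuple other than $(2,\dots,2)$ is exactly a non-trivial Collatz cycle. Your honest identification of the obstruction --- that any complete argument settles a famous open problem --- is the correct conclusion here, and your suggestion to attack it via the monoid structure of $S_k$ is consistent with the direction the paper itself gestures at but does not carry out.
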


If this conjecture is true, then there exist no cycles in the original Collatz graph aside from the trivial cycle $\dots\xrightarrow{f}1 \xrightarrow{g} 4 \xrightarrow{f} 2 \xrightarrow{f}\dots$.

\subsection{Proof of Unique Cycle Representation}
For later sections, the following theorem and its argument will prove useful:

\newtheorem{Unique Cycles}{Theorem}
\begin{LCF}
If $A = (a_1,\dots,a_n)$ and $B = (b_1,\dots,b_n)$ are $n$-tuples that satisfy the LCF such that $F_L(R^k(A))=F_L(R^k(B))$ for all $k \in \mathbb{N}_1$, then $A=B$.
\end{LCF}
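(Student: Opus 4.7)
The plan is to read off each entry $a_k$ from the pair $\bigl(F_L(R^{k-1}(A)),\, F_L(R^k(A))\bigr)$ by exploiting the recurrence $x_{k+1} = (3x_k+1)/2^{y_k}$ established inside the proof of Theorem 1, and then use the hypothesis to force $a_k = b_k$ for every $k$.

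First, I would set $\alpha_k := F_L(R^{k-1}(A))$ and $\beta_k := F_L(R^{k-1}(B))$ for $k = 1,\dots,n$. By the construction in the proof of Theorem 1, $\alpha_k$ is precisely the $k$-th vertex in the cycle associated with $A$ at which $g$ is applied (and similarly $\beta_k$ for $B$): rotating the tuple by one position amounts to starting the derivation of the LCF at the next $g$-vertex rather than the first. The hypothesis $F_L(R^k(A)) = F_L(R^k(B))$ for every $k \in \mathbb{N}_1$, together with the periodicity $R^n = R^0$, then gives $\alpha_k = \beta_k$ for all $k$.

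Second, I would invoke the transition formula $x_{k+1} = f^{y_k}(g(x_k)) = (3x_k+1)/2^{y_k}$ recorded in the proof of Theorem 1, applied to both cycles, with indices read modulo $n$:
\[
2^{a_k} \;=\; \frac{3\alpha_k + 1}{\alpha_{k+1}}, \qquad 2^{b_k} \;=\; \frac{3\beta_k + 1}{\beta_{k+1}}.
\]
Because $\alpha_k = \beta_k$ and $\alpha_{k+1} = \beta_{k+1}$, the two right-hand sides agree, so $2^{a_k} = 2^{b_k}$. The map $j \mapsto 2^j$ is injective on $\mathbb{N}_0$, so $a_k = b_k$ for every $k$ and hence $A = B$.

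The only mildly delicate point is the identification in the first step: one has to justify that the value $F_L(R^{k-1}(Y))$ is not merely an algebraic quantity but actually coincides with the specific $g$-vertex $x_k$ of the cycle represented by $Y$. This is immediate from the proof of Theorem 1, where $x_1$ was obtained by closing the chain $x_1 \to x_2 \to \dots \to x_n \to x_1$ and solving for $x_1$; the same derivation applied to $R^{k-1}(Y)$ produces $x_k$ in place of $x_1$. Once this is spelled out, the remainder of the proof is the one-line computation above.
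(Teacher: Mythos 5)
Your proof is correct and follows essentially the same route as the paper: both arguments identify the rotated outputs $F_L(R^{k-1}(\cdot))$ with the successive $g$-vertices of the cycle, equate them via the hypothesis, and then use the transition formula $x_{k+1}=(3x_k+1)/2^{y_k}$ together with injectivity of $j\mapsto 2^j$ to force $a_k=b_k$. Your version is in fact slightly more careful than the paper's, since you explicitly justify the identification $F_L(R^{k-1}(A))=x_k$ that the paper's family-of-equations display takes for granted.
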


\begin{proof}
Recall that, in proving Theorem 1, we constructed a family of equations which represented a cycle and, from there, constructed a tuple, which has the number of applications of $f$ before applying $g$ as entries. If $F_L(R^k(A))=F_L(R^k(B))$ for all $k \in \mathbb{N}$, then the families of equations the tuples represent are:

$x_2 = f^{a_1}(g(x_1)) = f^{b_1}(g(x_1))$

$x_3 = f^{a_2}(g(x_2)) = f^{b_2}(g(x_2))$

$\vdots$

$x_n = f^{a_{n-1}}(g(x_{n-1}) = f^{b_{n-1}}(g(x_{n-1}))$

$x_1 = f^{a_n}(g(x_n)) = f^{b_{n}}(g(x_{n}))$\\
Thus:

$\frac{3x_1 + 1}{2^{a_{1}}} = \frac{3x_1 + 1}{2^{b_{1}}}$

$\frac{3x_{2} + 1}{2^{a_{2}}} = \frac{3x_{2} + 1}{2^{b_{2}}}$

$\vdots$

$\frac{3x_{n-1} + 1}{2^{a_{n-1}}} = \frac{3x_{n-1} + 1}{2^{b_{n-1}}}$

$\frac{3x_n + 1}{2^{a_{n}}} = \frac{3x_n + 1}{2^{b_{n}}}$\\
It is easy to see that $a_k = b_k$ for all $1 \leq k \leq n$.

\end{proof}

Theorem 3 shows that, for every cycle in the Loosened Collatz Graph, it can be represented by a unique tuple, along with its bitwise rotations. It may be possible to discover a normal form for such satisfying tuples, though we won't do so here.

\section{The Monoid Structure of Satisfying Tuples}
It may be useful to explore directed circuits more generally, instead of cycles as we have defined them. This can be done by investigating tuples instead of cycles in the Loosened Collatz Graph, which leads us to asking whether there exist any algebraic properties of tuples. Further, we ask whether it is possible to construct further tuples that satisfy the LCF. With guidance from an article introducing monoids (\citeauthor{monoids}), we investigate how cycles may be interpreted as such.

Define $S_k$ to be the union of the set of all tuples $X$ with non-negative integer entries, such that $F_L(X) = k$ for some positive integer $k$, and the empty set. So, $S_k = \{X=(x_1,\dots,x_n), \space x_i \in \mathbb{N}_0 \text{ }|\text{ } F_L(X) = k \} \space \cup \space \{ \emptyset \}$, for some $k \in \mathbb{N}_1$. Denote the binary operation of concatenation between two tuples $X = (x_1,\dots,x_p)$ and $Y = (y_1, \dots, y_q)$ with $\cdot$, such that $X \cdot Y = (x_1,\dots, x_p, y_1, \dots, y_q)$. It will be also useful to denote the number of entries of a tuple $Z = (z_1,\dots,z_r)$ as $[Z] = r $ for $r \in \mathbb{N}_0$.

\newtheorem{Sk}{Lemma}
\begin{Sk}
If $F_L(X)=F_L(Y)=k$ for some $X,Y \in S_k$, then $F_L(X \cdot Y)=F_L(Y \cdot X) = k$. Thus, $S_k$ is closed under concatenation.
\end{Sk}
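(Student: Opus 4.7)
The plan is to prove closure by a direct algebraic computation on the defining formula of $F_L$, using the fact that the hypothesis pins down the numerators of $F_L(X)$ and $F_L(Y)$ in terms of $k$ and the corresponding sums and lengths.

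First I would introduce the bookkeeping notation $p=[X]$, $q=[Y]$, $S_X=\sum_{i=1}^p x_i$, $S_Y=\sum_{i=1}^q y_i$, and write the numerator of $F_L$ as $N(Z) = \sum_{i=1}^{[Z]} 3^{[Z]-i}\cdot 2^{\sum_{j=0}^{i-1}z_j}$. The assumption $F_L(X)=F_L(Y)=k$ then reads
\[
N(X) = k\bigl(2^{S_X}-3^p\bigr), \qquad N(Y) = k\bigl(2^{S_Y}-3^q\bigr).
\]
The main step is to compute $N(X\cdot Y)$ by splitting its defining sum at the boundary $i=p$. For the indices $1\le i\le p$, each summand carries a factor $3^{p+q-i}=3^q\cdot 3^{p-i}$ while the inner exponent $\sum_{j=0}^{i-1}z_j$ coincides with $\sum_{j=0}^{i-1}x_j$, so this block contributes $3^q\,N(X)$. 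For the indices $p+1\le i\le p+q$, writing $i=p+\ell$, the inner exponent equals $S_X+\sum_{j=0}^{\ell-1}y_j$ (using $y_0=0$), which pulls out a common factor $2^{S_X}$ and leaves the exact defining sum for $N(Y)$; this block contributes $2^{S_X}\,N(Y)$. Hence
\[
N(X\cdot Y) = 3^q\,N(X) + 2^{S_X}\,N(Y).
\]

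Substituting the expressions for $N(X)$ and $N(Y)$ and noting that the denominator of $F_L(X\cdot Y)$ is $2^{S_X+S_Y}-3^{p+q}$, the numerator of $F_L(X\cdot Y)$ becomes
\[
k\bigl(3^q\cdot 2^{S_X}-3^{p+q}+2^{S_X+S_Y}-2^{S_X}\cdot 3^q\bigr) = k\bigl(2^{S_X+S_Y}-3^{p+q}\bigr),
\]
which gives $F_L(X\cdot Y)=k$. Exchanging the roles of $X$ and $Y$ in the splitting argument yields $N(Y\cdot X) = 3^p\,N(Y)+2^{S_Y}\,N(X)$ and the identical cancellation produces $F_L(Y\cdot X)=k$, so $S_k$ is closed under $\cdot$.

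The main obstacle is purely bookkeeping: one must carefully re-index the nested exponent $\sum_{j=0}^{i-1}z_j$ in the second block so that the leading constant $S_X$ factors out cleanly and the surviving sum matches the definition of $N(Y)$ without an off-by-one error in the $y_0=0$ convention. Once that reindexing is correct, the algebra collapses and the result is immediate. One could alternatively appeal to the construction in the proof of Theorem 1: the tuple $X\cdot Y$ describes the directed circuit obtained by traversing the $X$-circuit starting at $k$ and then the $Y$-circuit starting at $k$, so the derivation of Theorem 1 applies verbatim and forces the fixed point to again be $k$; however, the direct computation above is more self-contained and will be needed for the later monoid development.
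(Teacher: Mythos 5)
Your proof is correct, but it takes a genuinely different route from the paper's. The paper argues dynamically: it writes out the family of equations $c_{i+1}=f^{d_i}(g(c_i))$ with $c_1=k$ for the concatenated tuple, observes that the first $p$ equations reproduce the orbit of $X$ (so $c_{p+1}=k$) and the next $q$ reproduce the orbit of $Y$ (so $c_{p+q+1}=k$), and concludes that the concatenated circuit closes up at $k$; this leans on the correspondence between satisfying tuples and closed orbits established in the proof of Theorem 1 --- exactly the ``alternative'' you mention in your final sentence. You instead prove the splitting identity $N(X\cdot Y)=3^{q}N(X)+2^{S_X}N(Y)$ directly from the definition of $F_L$ and cancel; I checked the reindexing of the inner exponent in the second block (the $2^{S_X}$ factor and the $y_0=0$ convention) and the algebra is right, as is the symmetric identity for $Y\cdot X$. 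Your computation buys self-containedness and avoids any appeal to the integrality or interpretation of the intermediate values $a_2,\dots,a_p$ (which the paper asserts lie in $\mathbb{N}_1$ without justification, since membership in $S_k$ only constrains the final output); the paper's version buys brevity and makes explicit the circuit-concatenation picture that motivates the monoid structure in the rest of Section 3. Either is acceptable, and your closed-form identity for $N(X\cdot Y)$ is independently useful.
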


\begin{proof} 
Recall the family of equations constructed in the proofs for Theorem 1 and Theorem 3. If $F_L(X)=F_L(Y)=k$, where $X=(x_1,\dots,x_p)$ and $Y=(y_1,\dots,y_q)$, then

$a_2 = f^{x_1}(g(k))$ 

$a_3 = f^{x_2}(g(a_2))$

$\vdots$ 

$a_p = f^{x_{p-1}}(g(a_{p-1}))$

$k = f^{x_p}(g(a_p))$\\
is the family of equations for the tuple $X$, where $a_2,\dots,a_p\in \mathbb{N}_1$ and $k=a_1$, and

$b_2 = f^{y_1}(g(k))$ 

$b_3 = f^{y_2}(g(b_2))$

$\vdots$ 

$b_q = f^{y_{q-1}}(g(b_{q-1}))$

$k = f^{y_q}(g(b_q))$\\
is the family of equations for the tuple $Y$, where $b_2,\dots,b_q \in \mathbb{N}_1$ and $k=b_2$. From this, let us observe the following family of equations:

$c_2 = f^{x_1}(g(c_1))$ 

$c_3 = f^{x_2}(g(c_2))$

$\vdots$ 

$c_p = f^{x_{p-1}}(g(c_{p-1}))$

$c_{p+1} = f^{x_p}(g(c_p))$

$c_{p+2} = f^{y_1}(g(c_{p+1}))$ 

$c_{p+3} = f^{y_2}(g(c_{p+2}))$

$\vdots$ 

$c_{p+q} = f^{y_{q-1}}(g(c_{p+q-1}))$

$c_{p+q+1} = f^{y_q}(g(c_{p+q}))$\\
where $c_1 = k$. With reference to the previous two families, we see that $c_i = a_i$, for $ 1 \leq i \leq p$, and so $c_{p+1} = k$. Likewise, $c_{p+j} = b_j$, for $1 \leq j \leq q$, and so $c_{p+q+1} = k$. Therefore, we observe that $F_L((x_1,\dots,x_p,y_1,\dots,y_q)) = k$. Since $(x_1,\dots,x_p,y_1,\dots,y_q) = (x_1,\dots,x_p) \cdot (y_1,\dots,y_q) = X \cdot Y$, it follows that, if $F_L(X)=F_L(Y)=k$, then $F_L(X \cdot Y) = k$. A similar argument can be used to show that $F_L(Y \cdot X) = k$ also holds.
\end{proof}

Thus, the set $S_k$ is closed under concatenation. It is worth remarking that $S_k$ equipped with concatenation is not commutative under equality, since, for $X,Y \in S_k\setminus \{\emptyset\}$, $X \cdot Y \neq Y \cdot X$, even though the circuits they represent in the Loosened Collatz Graph pass through the same edges and vertices the same number of times. So, we denote $A \simeq B$ for tuples $A$ and $B$ $\in S_k$ if, when observing the circuits of the graph, the unordered sets of edges and vertices in the circuits represented by $A$ and $B$, as well as the number of times each edge and vertex has been met, are equal. We can say that $A$ and $B$ are equivalent, since the binary relation $\simeq$ is an equivalence relation. So, $X \cdot Y \simeq Y \cdot X$, since both circuits have the same set of edges and vertices and each edge and vertex has been passed through the same number of times in both $X \cdot Y$ and $Y \cdot X$. It is also clear to see that $A \simeq B$ requires $A$ and $B$ to have the same number of entries, since the cycles they represent must have $g$ applied an equal number of times in both.

From this, we can derive the following theorem:

\newtheorem{MonoidEquivalence}{Theorem}
\begin{LCF}
The set $S_k$ equipped with concatenation is a commutative, cancellative monoid under equivalence.
\end{LCF}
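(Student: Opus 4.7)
The plan is to verify each monoid axiom in turn, reserving most of the work for cancellativity. Closure is given by the Lemma immediately preceding this theorem; associativity of concatenation is immediate from its definition on tuples; and the empty tuple is a two-sided identity, since $\emptyset \cdot X = X \cdot \emptyset = X$ for every $X \in S_k$. Commutativity under $\simeq$ has essentially been argued in the preamble: for $X, Y \in S_k \setminus \{\emptyset\}$ with $F_L(X) = F_L(Y) = k$, both $X \cdot Y$ and $Y \cdot X$ trace the same two sub-circuits from $k$ back to $k$, merely in reversed order, so the unordered multisets of vertices and edges traversed coincide.

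Before addressing cancellativity, I would verify that $\simeq$ is a congruence with respect to $\cdot$, so that the operation descends to a well-defined concatenation on the quotient $S_k / \simeq$. This is again routine: replacing a factor by an equivalent tuple alters neither the multiset of edges nor the multiset of vertices of the combined concatenation, nor its length.

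The main obstacle will be cancellativity, namely showing that $X \cdot Y \simeq X \cdot Z$ implies $Y \simeq Z$. The key structural observation --- extracted directly from the proof of the preceding Lemma --- is that the closed walk induced by a concatenation $A \cdot B$ in the Loosened Collatz Graph decomposes into the $A$-subwalk followed by the $B$-subwalk, each beginning and ending at $k$. Consequently the multiset of edges visited by $A \cdot B$ (and similarly the multiset of vertices) is precisely the multiset sum of those visited by $A$ and by $B$. Given $X \cdot Y \simeq X \cdot Z$, equating these sums and cancelling the common $X$-summand in the free commutative monoid of multisets forces the edge and vertex multisets of $Y$ and $Z$ to agree. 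Combined with $[X \cdot Y] = [X] + [Y] = [X] + [Z] = [X \cdot Z]$, which forces $[Y] = [Z]$, this yields $Y \simeq Z$. Right cancellativity then follows by combining left cancellativity with the commutativity already established, completing the verification of all the monoid axioms.
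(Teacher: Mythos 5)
Your proposal is correct and follows essentially the same route as the paper: verify associativity and the identity $\emptyset$ directly, take closure from the preceding Lemma and commutativity from the preamble discussion, and reduce cancellativity to the fact that the circuit of $X \cdot Y$ is the $X$-circuit from $k$ to $k$ followed by the $Y$-circuit. Your rendering of the cancellation step as cancellation of multiset sums (and your explicit check that $\simeq$ is a congruence for $\cdot$) is a sharper, more rigorous version of the paper's appeal to ``removing the circuit represented by $X$'' via the family-of-equations argument, but it is the same underlying idea rather than a different proof.
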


\begin{proof}
Note that a set with its binary operator is a monoid if it follows the associative law and contains an identity element. Let $X,Y,Z \in S_k$ be tuples $X=(x_1,\dots,x_p),Y=(y_1,\dots,y_q)$ and $Z=(z_1,\dots,z_r)$. Observe that:

\begin{equation}
\begin{split}
    (X \cdot Y) \cdot Z &  \simeq (x_1,\dots,x_p,y_1,\dots,y_q) \cdot (z_1,\dots,z_r) \\
    & \simeq (x_1,\dots,x_p,y_1,\dots,y_q,z_1,\dots,z_r)\\
    & \simeq (x_1,\dots,x_p) \cdot (y_1,\dots,y_q,z_1,\dots,z_r)\\
    & \simeq X \cdot (Y \cdot Z)
\end{split}
\end{equation}
So, the associative law holds. It is clear to see that, for all $X \in S_k$,

$X \cdot \emptyset \simeq \emptyset \cdot X \simeq X$.\\
So, there exists an identity element, which proves that $S_k$ equipped with concatenation is a monoid under equivalence. If $S_k$ is cancellative, then $X \cdot Y \simeq X \cdot Z$ implies that $Y \simeq Z$. Following from above, if $X \cdot Y \simeq X \cdot Z$, then

$(x_1,\dots,x_p,y_1,\dots,y_q) \simeq (x_1,\dots,x_p,z_1,\dots,z_r)$.\\
From this, observe that the $[X \cdot Y]$ and $[X \cdot Z]$ must be equal. So, $p+q = p+r$, which leads to $q=r$. Further, by equivalence, $X \cdot Y$ and $X \cdot Z$ have the same unordered set of edges and vertices encountered in their circuits as well as the number of times each edge and vertex has been met. Using the family of equations argument from Theorem 3, $Y$ and $Z$ must be equivalent to keep the unordered set of edges and vertices equal after removing the circuit represented by $X$. This leads to the implication that $Y \simeq Z$.
\end{proof}

One could also prove that $S_k$ is a non-commutative, cancellative monoid under equality as opposed to equivalence, though this will not be as useful later. Since $S_k$ is a commutative, cancellative monoid under equivalence, we can make a few key definitions and observations about elements in $S_k$, adapted from a paper on Unique Factorisation in Abstract Algebra (\cite{UFM}):

\begin{itemize}
    \item $X | Y$ in $S_k$ if $X \cdot Z \simeq Y$ for some $Z \in S_k$.
    \item Define $U \in S_k$ to be a unit in $S_k$ if there exists an $V \in S_k$ such that $U \cdot V \simeq V \cdot U \simeq \emptyset$, since $\emptyset$ is the identity. Clearly, the only unit is $\emptyset$, since this is the only element that has an 'inverse', itself.
    \item Define $S_k^{*}$ to be the set of elements of $S_k$ that are not units, which is equal to $S_k \setminus \{\emptyset\}$.
    \item If $X \in S_k^{*}$ and $Y,Z \in S_k$, then we say that:
    \begin{itemize}
        \item $X$ is prime if, whenever $X|Y \cdot Z$, $X|Y$ or $X|Z$.
        \item $X$ is irreducible or an atom if, whenever $X \simeq Y \cdot Z$, then $Y$ or $Z$ is a unit in $S_k$.
    \end{itemize}
    \item Define $A(S_k)$ to be the set of tuples that are irreducible in $S_k$. By using the family of equations construction for any tuple in $S_k$, we can see that $A(S_k)$ contains the set of all tuples representing circuits that apply $g$ to $k$ once, which do not necessarily have to be cycles. Thus, for all $X \in A(S_k)$, $F_L(R^l(X)) \neq k$ for $1 \leq l < [X]$
    \item $A,B \in S_k$ are associates if $A \simeq B \cdot U$ for some unit $U$. Since the only unit is $\emptyset$, an associate of any element is simply itself.
\end{itemize}

From here, we can make connection between all primes and all atoms.

\newtheorem{UFM}{Corollary}
\begin{UFM}
All primes are atoms and all atoms are primes in $S_k$.
\end{UFM}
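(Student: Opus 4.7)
The plan is to treat the two implications separately, handling the easier direction first. Suppose $X \in S_k^*$ is prime and $X \simeq Y \cdot Z$. Then $X | Y \cdot Z$, so by primeness $X | Y$ or $X | Z$; without loss of generality say $X | Y$, giving $Y \simeq X \cdot V$ for some $V \in S_k$. Substituting yields $X \simeq X \cdot V \cdot Z$, and Theorem 4's cancellativity gives $\emptyset \simeq V \cdot Z$. Since $\emptyset$ is the only unit of $S_k$, both $V$ and $Z$ must equal $\emptyset$; in particular $Z$ is a unit, so $X$ is irreducible.

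For the converse, the strategy is to establish that $S_k$ under $\simeq$ is essentially a free commutative monoid on its atoms, after which atom $\Rightarrow$ prime will follow in the usual way. First I would prove existence of an atomic factorisation for every $X = (y_1,\dots,y_p) \in S_k^*$: the family of equations from Theorem 1 produces intermediate states $x_2,\dots,x_p$, and each index $j$ with $x_{j+1} = k$ marks a legitimate split of $X$ into two smaller elements of $S_k$. Splitting at every such index yields a concatenation of atoms, proving existence.

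Next I would show that this factorisation is unique up to $\simeq$ and permutation of factors. The number of atomic factors of $X$ is forced to equal the multiplicity of $k$ in the vertex multiset of $X$, which is a $\simeq$-invariant, and I would argue that the multiset of atoms themselves is an invariant as well. Once uniqueness is established, the corollary follows cleanly: if $X$ is an atom and $X | Y \cdot Z$, then $Y \cdot Z \simeq X \cdot W$ has $X$ appearing in its unique atomic factorisation; since that factorisation decomposes along the concatenation boundary into the atoms of $Y$ followed by the atoms of $Z$, the atom $X$ must coincide with one on either side, yielding $X | Y$ or $X | Z$.

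The main obstacle will be proving uniqueness of the atomic factorisation up to $\simeq$. Because $\simeq$ only records vertex and edge multisets, two circuits with the same multisets could \emph{a priori} split into simple $k$-cycles differently. The argument must leverage the deterministic nature of $f$ and $g$ in the Loosened Collatz graph---each vertex has a unique $f$-image and a unique $g$-image---to show that any valid traversal of the common edge multiset starting at $k$ recovers the same multiset of atoms. Theorem 3's unique-representation technique should serve as a template for identifying atom-equivalence classes by their trajectories.
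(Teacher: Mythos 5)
Your first direction (prime $\Rightarrow$ atom) is exactly the paper's argument: factor $X \simeq Y \cdot Z$, use primeness to get $X \mid Y$ or $X \mid Z$, substitute, and cancel to force the complementary factor to be the unit $\emptyset$. That half is fine.

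The second direction is where you diverge from the paper, and where your proposal has a genuine gap. The paper's own proof of atom $\Rightarrow$ prime is a short direct argument that only examines the case $X \simeq Y \cdot Z$ and concludes $X \mid Y$ or $X \mid Z$ from the fact that one factor is a unit; you instead (correctly) aim at the actual definition, namely the case $X \mid Y \cdot Z$ with $X$ possibly a proper divisor, and you propose to get there by first establishing that $S_k$ is a unique factorisation monoid on its atoms. The problem is that this inverts the paper's logical order: the paper proves unique factorisation in Corollary 2 \emph{by using} the fact that atoms are primes (the standard ``match $\alpha_1$ against some $\beta_t$'' argument), so you cannot appeal to that result here without circularity. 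You acknowledge this and say you would prove uniqueness independently, via the multiplicity of the edge from $k$ to $g(k)$ in the edge multiset and the determinism of $f$ and $g$, but this is precisely the hard step and it is left as a sketch. In particular, the claim that the \emph{multiset of atoms} (not just their number) is a $\simeq$-invariant is asserted, not proven: $\simeq$ only remembers unordered edge and vertex multisets, and you would need to show that any circuit realising a given multiset and starting at $k$ splits into the same atoms at its returns to $k$. Until that lemma is actually established, the atom $\Rightarrow$ prime direction does not go through in your proposal. If you can complete it, your route is strictly stronger than the paper's (which, as written, never addresses $X \mid Y \cdot Z$ when $X \not\simeq Y \cdot Z$), but as it stands the key step is missing.
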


\begin{proof}
Supposing that $P \in S_k$ is prime, if $P \simeq A \cdot B$ for some $A,B \in S_k$, then $P|A$ or $P|B$. If $P|A$, then $A \simeq P \cdot C$ for some $C \in S_k$. Thus, $P \simeq A \cdot B \simeq P \cdot C \cdot B$ and, since $S_k$ is cancellative, $P \simeq P \cdot C \cdot B$ implies $\emptyset \simeq C \cdot B$ and so $C$ and $B$ is a unit. Likewise, we may apply the same logic if $P|B$ to deduce that $A$ is a unit. Since $P$ is neither a unit nor can be expressed as non-units, $P$ is atomic.

Supposing $X \in S_k$ is atomic, if $X \simeq Y \cdot Z$, then either $Y$ or $Z$ is a unit of $S_k$. If $Y$ is a unit, then $Y = \emptyset$ since that is the only unit of $S_k$, which leads to $X \simeq Z$, by the identity law. If $X \simeq Z$, then $X|Z$, so $X$ is also prime. We may apply the same logic if $Z$ is a unit instead of $Y$ to deduce that $X|Y$. This shows that all primes are atomic and all atoms are primes.
\end{proof}
We shall use them interchangeably now.

\newtheorem{unique}{Corollary}
\begin{UFM}
$S_k$ is a unique factorisation monoid (UFM).
\end{UFM}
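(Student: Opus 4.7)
The plan is to establish both the existence of a factorisation into atoms for every element of $S_k^*$ and its essential uniqueness, which together yield the UFM property.

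First I would prove existence by strong induction on the number of entries $[X]$. The base case $[X] = 1$ is immediate: a one-entry tuple cannot be written as a concatenation of two non-empty tuples, so it is vacuously irreducible. For the inductive step, take $X \in S_k^*$ with $[X] \geq 2$. If $X$ is already atomic, there is nothing to do. Otherwise there exists a decomposition $X \simeq Y \cdot Z$ with $Y, Z \in S_k^*$, and since $[Y] + [Z] = [X]$ with $[Y], [Z] \geq 1$, both factors are strictly shorter. Applying the inductive hypothesis to $Y$ and $Z$ and concatenating the resulting atomic tuples gives an atomic factorisation of $X$.

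Next, for uniqueness, suppose
$P_1 \cdot P_2 \cdots P_m \simeq Q_1 \cdot Q_2 \cdots Q_n$
with each $P_i, Q_j \in A(S_k)$. I would induct on $m$. Since atoms are prime by Corollary 1, $P_1 | Q_1 \cdots Q_n$ together with an inductive use of the defining property of a prime yields $P_1 | Q_j$ for some $j$. Hence $Q_j \simeq P_1 \cdot C$ for some $C \in S_k$, and irreducibility of $Q_j$ together with the fact that $P_1$ is not a unit forces $C \simeq \emptyset$, so $Q_j \simeq P_1$. Commutativity under $\simeq$ lets us reorder the right-hand side so that $Q_j$ sits in the first position, and cancellation from Theorem 4 then removes a copy of $P_1$ from both sides, leaving a shorter identity to which the inductive hypothesis applies. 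Because the only unit of $S_k$ is $\emptyset$, each atom is its own associate, so the resulting factorisation is unique up to reordering in the standard sense.

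The main obstacle I expect is the bookkeeping required to handle $\simeq$ instead of equality throughout: divisibility, primality, and cancellation have all been stated with respect to $\simeq$, so one must verify that every rearrangement used in the uniqueness argument respects equivalence. Once the correspondence between primes and atoms from Corollary 1 is invoked, however, the proof reduces to the textbook argument for unique factorisation in a commutative, cancellative atomic monoid, and the only genuinely new content is the existence step together with the observation that nontrivial splittings strictly decrease $[X]$, which forces the factorisation process to terminate.
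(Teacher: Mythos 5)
Your proposal is correct and follows essentially the same route as the paper: existence of an atomic factorisation by descent on the number of entries $[X]$ (the paper peels off one atom at a time and appeals to well-ordering, you split into two strictly shorter non-units and apply strong induction), and uniqueness via the atoms-are-primes equivalence of Corollary 1 followed by cancellation of matched factors. The two arguments differ only in bookkeeping, not in substance.
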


\begin{proof}
For $S_k$ to be a UFM, the following conditions must hold for any element $X \in S_k^{*}$:

\begin{enumerate}
  \item There exist atoms $\alpha_1,\dots,\alpha_k \in S_k$ such that $X \simeq \alpha_1 \cdot \dots \cdot \alpha_k$. 
  \item If $\alpha_1,\dots,\alpha_k$ and $\beta_1,\dots,\beta_j$ are atoms of $S_k$ and $X \simeq \alpha_1 \cdot \dots \cdot \alpha_k \simeq \beta_1 \cdot \dots \cdot \beta_j$, then $k=j$ and there is a unique $s$ for each $t$ such that $\alpha_s = \beta_t$.
\end{enumerate}

To prove the first condition, let $X=(x_1,\dots,x_p)$ be an element of $S_k^{*}$, where $[X]=p$ for some $p \in \mathbb{N}_1$. It is either an atom or not: if it is an atom, then $X = \alpha_1$ for some atom $\alpha_1$; if it is not an atom, then it can be decomposed further into atoms. If so, let $X= \alpha_1 \cdot A_1$, where $\alpha_1$ is an atom and $A_1$ is an element of $S_k^{*}$. Since atoms cannot be the empty set, $[A_1] < [X] = p$. 

If $A_1$ is an atom, we may stop there. If $A_1$ is not an atom, then it can be decomposed further into atoms. If so, let $A_1 = \alpha_2 \cdot A_2$, where $\alpha_2$ is an atom and $A_2$ is an element of $S_k^{*}$. Since atoms cannot be the empty set, $[A_2] < [A_1] < [X]$. 

If we assume that there exists an $X$ that is neither an atom nor can be decomposed into atoms, then this process can occur infinitely many times. However, by the well-ordering principle, it cannot, since eventually there will exist an $A_k$ for some $k \in \mathbb{N}$ such that $[A_k] \leq 0$, which is not possible. Thus, there are finitely many times one can apply this process onto some $X \in S_k^{*}$. So, there will always exist atoms $\alpha_1,\dots,\alpha_k \in S_k$ such that $X \simeq \alpha_1 \cdot \dots \cdot \alpha_k$.

To prove the second condition, suppose that $X \simeq \alpha_1 \cdot \dots \cdot \alpha_k \simeq \beta_1 \cdot \dots \cdot \beta_j$, where $\alpha_1, \dots, \alpha_k$ and $\beta_1, \dots, \beta_j$ are primes. Assuming that $k \geq j$, we may consider $\alpha_1$ and notice that, by definition of being a prime, $\alpha_1|\beta_t$ for some $1 \leq t \leq j$. Since $\alpha_1$ and $\beta_t$ are both primes, they are associates, but since all associates are simply themselves, $\alpha_1 = \beta_t$. 

We may relabel $\beta_t$ as $\beta_1$ and $\beta_1$ to $\beta_t$, unless $t = 1$. So, $\alpha_1 \cdot \dots \cdot \alpha_k \simeq \beta_1 \cdot \dots \cdot \beta_j \simeq \alpha_1 \cdot \beta_2 \cdot \dots \cdot \beta_j$ and by the cancellative property, we obtain $\alpha_2 \cdot \dots \cdot \alpha_k \simeq \beta_2 \cdot \dots \cdot \beta_j$. We may repeat this process, cancelling out each $\alpha_s$ with a $\beta_t$. Supposing that not all atoms of $\alpha$ have been cancelled out by the end of this process, since $k \geq j$, we obtain $\alpha_{j+1} \cdot \dots \cdot \alpha_k \simeq \emptyset$. 

But this implies that the tuples $\alpha_{j+1}, \dots, \alpha_k = \emptyset$, so they are not truly atoms. This means that that the only tuples which are actually primes are $\alpha_1, \dots, \alpha_j$ and $\beta_1, \dots, \beta_j$, showing that the number of primes of both factorisations must be equal and that there is a unique $s$ for each $t$ such that $\alpha_s = \beta_t$. Using the same logic when assuming $j \geq k$ leads to us proving the second condition fully.
\end{proof}

This result implies that satisfying tuples can be decomposed into atomic/prime satisfying tuples. This is confirmed in the following confirmation:

\newtheorem{algorithm}{Observation}
\begin{algorithm}
There exists a prime decomposition for satisfying tuples, unique up to equivalence.
\end{algorithm}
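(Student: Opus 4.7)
The plan is to derive this observation directly from Corollary 2 by viewing each satisfying tuple as an element of a canonical monoid $S_k$. Given any satisfying tuple $X$, I would first set $k := F_L(X)$, which is a positive integer by the very definition of satisfaction. Then $X \in S_k$, and assuming $X \neq \emptyset$ we have $X \in S_k^{*}$, so Corollary 2 applies: there exist atoms $\alpha_1,\dots,\alpha_m \in A(S_k)$ with $X \simeq \alpha_1 \cdot \dots \cdot \alpha_m$, and any two such decompositions agree in length with their atoms matched bijectively.

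To turn this into the ``uniqueness up to equivalence'' statement of the observation, I would invoke the fact already established in Section 3 that the only unit of $S_k$ is $\emptyset$, so the associate relation on $S_k$ collapses to equality; in particular the bijection between two atomic factorisations pairs each atom with an equal (hence equivalent) atom. Commutativity of $S_k$ under $\simeq$ then absorbs any reordering of the factors, delivering uniqueness up to equivalence.

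The one subtlety I would make explicit -- and the closest thing here to an obstacle -- is that the ambient monoid $S_k$ should be determined intrinsically by $X$, rather than being an auxiliary choice. This follows because $F_L$ is single-valued, so $k = F_L(X)$ is a function of $X$ alone; moreover the definitions of divisibility and irreducibility from Section 3 already constrain the factors of $X$ in $S_k$ to lie in $S_k$, and Lemma 2 guarantees that concatenation preserves the common $F_L$-value, so any decomposition of $X$ into atoms necessarily consists of atoms of $S_k$ with the same $F_L$-output. Once this bookkeeping is recorded, the observation is essentially a translation of Corollary 2 back into the language of satisfying tuples, and no further work is required.
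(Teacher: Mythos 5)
Your derivation is correct: the Observation does follow logically from Corollary 2 (existence and uniqueness of atomic factorisations in $S_k^{*}$) together with Corollary 1 (atoms coincide with primes), and your bookkeeping about $k = F_L(X)$ being determined by $X$ and the factors remaining in $S_k$ is sound. However, the paper takes a genuinely different and more constructive route. Rather than citing the abstract UFM result, the paper's proof exhibits an explicit algorithm for computing the decomposition: writing out the family of equations $c_{i+1} = f^{d_i}(g(c_i))$ with $c_1 = k$, it observes that $D$ splits as $(d_1,\dots,d_p)\cdot(d_{p+1},\dots,d_n)$ exactly when some intermediate value $c_{p+1}$ returns to $k$, which is detected by testing whether $F_L(R^{l}(D)) = k$ for successive rotations $l$; the smallest such $l$ peels off an atom $(d_1,\dots,d_l)$, and the procedure recurses on the remainder. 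What your approach buys is brevity and logical economy -- the Observation becomes a one-line corollary. What the paper's approach buys is an effective procedure: the descent argument in Corollary 2 guarantees a factorisation exists but gives no way to find it, whereas the rotation test turns the LCF itself into a primality/splitting test on tuples, which is what makes the decomposition usable for the later reduction to prime tuples. Both arguments are valid; yours proves the statement, the paper's proves it and computes it.
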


\begin{proof}
Recall that, if $F_L(X) = F_L(Y) = k$ for some $X,Y \in S_k$, then $F_L(X \cdot Y) = F_L(Y \cdot X) = k$ and so $X \cdot Y$ and $Y \cdot X$ are also in $S_k$. Usually, the inverse is not true: $X \cdot Y \in S_k$ does not imply $X,Y \in S_k$ for any tuple $X,Y$, unless $F_L(X \cdot Y) = F_L(Y \cdot X) = k$. To see why, observe that the family of equations produced for any tuple $D = (d_1,\dots,d_n)$ in $S_k^{*}$:

$c_2 = f^{d_1}(g(c_1))$ 

$c_3 = f^{d_2}(g(c_2))$

$\vdots$ 

$c_p = f^{d_{p-1}}(g(c_{p-1}))$

$c_{p+1} = f^{d_p}(g(c_p))$

$c_{p+2} = f^{d_{p+1}}(g(c_{p+1}))$ 

$c_{p+3} = f^{d_{p+2}}(g(c_{p+2}))$

$\vdots$ 

$c_{n} = f^{d_{n-1}}(g(c_{n-1}))$

$c_{1} = f^{d_n}(g(c_{n}))$\\
where $c_1 = k$. If there exists a $1 < p < n$, such that $c_{p+1} = k$, then the tuples $(d_1,\dots,d_p)$ and $(d_{p+1},\dots,d_n)$ are also in $S_k$. Let $X= (d_1,\dots,d_p)$ and $Y=(d_{p+1},\dots,d_n)$. Notice that $D = X \cdot Y = R^{[Y]}(Y \cdot X)$. 

Thus, to find atoms in a tuple $D$, we may bitwise rotate that tuple, until $F_L(R^{l}(D)) = k$ for $1 \leq l < [D]$, where $l$ is the smallest such value, if that occurs. If it does not occur, then by definition $D$ is an atom. If it does occur, then we can note that $D$ decomposes into the tuples $(d_1,\dots,d_l)$ and $(d_{l+1},\dots,d_{[D]})$. Since $l$ is the smallest value such that $F_L(R^{l}(D)) = k$, then the tuple $(d_1,\dots,d_l)$ is an atom by definition. We may continue this process on the tuple $(d_{l+1},\dots,d_{[D]})$, decomposing it further into atoms. Thus, we have an algorithm for decomposing a tuple into a product of atoms over concatenation.
\end{proof}

Thus, to investigate properties of satisfying tuples, it is sufficient to consider only those that are prime. This reduces the number of satisfying tuples of interest substantially.

\section{N-Dimensional Objects, representing Cycles}
Suppose that we interpreted the $n$-tuple $Y=(y_1,\dots,y_n)$ as a coordinate in $\mathbb{R}^n$. By Theorem 2, we know that, if one coordinate satisfies LCF, then so do all bit wise rotations of it. It may be worth looking at the object generated by these coordinates. One way to do so is to connect consecutive points together, creating an $n$-dimensional polygon. Let $S=(Y,R(Y),\dots,R^n(Y))$ be the set of vertices in $\mathbb{R}^n$ and $L=(L_1,L_2,\dots,L_n)$ be the set of edges in $\mathbb{R}^n$, such that $L_k$ is the edge from $R^{k-1}(Y)$ to $R^{k}(Y)$, for $1\leq k < n$, and $L_n$ is the edge from $R^{n-1}(Y)$ to $Y$. We denote $|L_k|$ to be the length of edge $L_k$. We then observe the trivial observation:

\newtheorem{Lemma}{Observation}
\begin{Lemma}
For all $k \in \mathbb{N}_1$, where $1 \leq k \leq n$, $|L_k|$ = $|L_{k+1}|$.
\end{Lemma}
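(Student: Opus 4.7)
The plan is to extend the rotation map $R$ to a linear map on $\mathbb{R}^n$, observe that this map is a coordinate permutation and hence an orthogonal transformation, and then use the resulting isometry to transport each edge onto the next. Concretely, I would define $\widetilde{R}: \mathbb{R}^n \to \mathbb{R}^n$ by $\widetilde{R}(a_1,\dots,a_n) = (a_2,\dots,a_n,a_1)$, so that $\widetilde{R}$ agrees with $R$ on $\mathbb{N}_0^n$. Its matrix in the standard basis is a cyclic permutation matrix, which is orthogonal, so $\widetilde{R}$ preserves the Euclidean norm. (I will write $R$ for $\widetilde{R}$ from here on since no confusion can arise.)

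With that in hand, the main step is the one-line chain
\[
|L_{k+1}| = \| R^{k+1}(Y) - R^{k}(Y) \| = \| R\bigl( R^{k}(Y) - R^{k-1}(Y) \bigr) \| = \| R^{k}(Y) - R^{k-1}(Y) \| = |L_k|,
\]
where the second equality uses linearity of $R$ and the third uses that $R$ is an isometry. This handles $1 \leq k < n$. For the wrap-around case (interpreting $L_{n+1}$ as $L_1$, since the polygon closes up), the same argument applies because $R^n$ is the identity on $n$-tuples, so the edge from $R^{n-1}(Y)$ to $Y$ is simply the image under $R^{n-1}$ of the edge from $Y$ to $R(Y)$.

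An equivalent direct route, if one prefers to avoid invoking linear algebra, is to compute $|L_k|^2$ explicitly: writing $y_{n+1} := y_1$ for convenience, the difference $R^{k}(Y) - R^{k-1}(Y)$ has as its $i$-th coordinate $y_{k+i-1} - y_{k+i}$ (with subscripts taken mod $n$), and summing squares gives $|L_k|^2 = \sum_{i=1}^{n}(y_i - y_{i+1})^2$, a cyclic sum that is manifestly independent of $k$. I do not anticipate any real obstacle here — as the statement itself labels this an \emph{Observation}, the content is essentially a restatement of the fact that cyclic coordinate shifts are isometries of $\mathbb{R}^n$, and either of the two routes above resolves it in a few lines.
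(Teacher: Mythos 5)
Your proof is correct on both routes: the cyclic shift extends to a permutation matrix on $\mathbb{R}^n$, hence an isometry, and $L_{k+1}$ is the image of $L_k$ under it; the explicit cyclic sum $\sum_{i=1}^{n}(y_i - y_{i+1})^2$ confirms the same thing. Note, however, that the paper supplies no proof at all for this statement --- it is presented as a ``trivial observation'' followed only by the remark that it holds for any $n$-tuple --- so there is nothing to compare against; your argument simply fills in what the paper omits, and you were right to handle the wrap-around case $k=n$ explicitly (via $R^n = \mathrm{id}$), which the paper's phrasing ``$1 \leq k \leq n$'' quietly requires.
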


This holds for any n-tuple, regardless of whether it satisfies the LCF or not. Recall that a cycle is a simple directed circuit in the Loosened Collatz Graph, and that, in this cycle, $g$ is applied $n$ many times. This is in fact the length of the associated satisfying tuple, $S$. We can thus denote the number of times $g$ is applied in a cycle $C$ as $|S| = n $. Also note that, by Theorem 2, for every vertex that has $g$ applied to it in $C$, it can be represented as bitwise rotations of a single $n$-tuple. Interpreting these tuples as coordinates in $\mathbb{R}^n$ and the n-dimensional object created as above, we can provide the following theorem.

\newtheorem{DualShape}{Theorem}
\begin{LCF}
For each and every cycle $C$ in the Collatz Function, there exists a unique object in $\mathbb{R}^{|S|}$.
\end{LCF}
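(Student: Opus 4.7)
The plan is to verify two facts: first, that the construction of the polygon $(S,L)$ yields an object in $\mathbb{R}^{|S|}$ depending only on the cycle $C$ and not on the choice of starting representative tuple; and second, that distinct cycles yield distinct objects. Uniqueness in the statement naturally refers to both well-definedness of the assignment and to its injectivity.

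For well-definedness, I would rely on Theorem 2, which shows that a cycle $C$ corresponds not to a single tuple but to the entire orbit $\{Y, R(Y), \dots, R^{n-1}(Y)\}$ under bitwise rotation, where $n=|S|$ is the number of applications of $g$ in $C$. Viewed as points of $\mathbb{R}^n$, this orbit is precisely the vertex set $S$ of the constructed polygon, irrespective of which $R^j(Y)$ is named as the starting representative, since $R^n$ fixes the tuple. The edge collection $L$, consisting of segments between cyclically consecutive rotations together with the closing edge, is a cyclic sequence of pairs that is likewise unchanged by relabelling the starting vertex. Hence the pair $(S,L)$ is intrinsic to $C$.

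For injectivity, I would suppose that cycles $C_1$ and $C_2$ produce the same object in $\mathbb{R}^n$. Then $|S_1|=|S_2|=n$, so the ambient spaces agree, and their vertex sets coincide as subsets of $\mathbb{R}^n$. In particular, some representative tuple of $C_1$ equals some representative tuple of $C_2$ as a coordinate, and hence equals a bitwise rotation of any fixed representative of $C_2$. Theorem 3 then forces the underlying tuples to coincide entry-wise, so $C_1 = C_2$.

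The main obstacle is handling degenerate cases in the well-definedness step, where several rotations collide as points of $\mathbb{R}^n$, most notably the trivial tuple $(2,2,\dots,2)$, which is fixed by $R$ and collapses the polygon to a single point with no proper edges. I would address this by interpreting the object as the ordered cyclic sequence of $n$ vertices, possibly with repeats, together with its cyclic sequence of edges, so that collapse does not invalidate the construction. With this convention, both the well-definedness argument and the appeal to Theorem 3 go through uniformly.
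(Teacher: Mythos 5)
Your proposal follows essentially the same route as the paper: well-definedness of the object via Theorem 2 (the rotation orbit of the tuple is intrinsic to the cycle) and injectivity via Theorem 3 (uniqueness of the representing tuple up to rotation). Your version is more carefully argued than the paper's two-sentence proof, particularly in handling degenerate cases such as $(2,2,\dots,2)$, but the underlying idea is identical.
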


\begin{proof}
Theorem 2, combined with the interpretation of tuples as coordinates, shows that every cycle has a unique object. Theorem 3 proves that every cycle can be represented by a unique tuple, along with its bitwise rotations. Together, this proves the above theorem.
\end{proof}

Thus, we have found a neat one-to-one correspondence between solutions to the LCF and their geometric counterparts. 
From this, an equivalent conjecture to Conjecture 2 states:

\newtheorem{Conjecture4}{Conjecture}
\begin{LCC}
Excluding the point $(2,2,...)$, all dual objects of satisfying cycles require their vertices to be 0 on at least one axis.
\end{LCC}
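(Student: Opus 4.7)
The plan is to establish Conjecture 4 as a direct geometric restatement of Conjecture 2, so that a full proof of Conjecture 4 would be tantamount to a proof of Conjecture 2. Since the author has already flagged the two statements as equivalent, the task splits into a short translation step, which is routine, and the actual content, which is the obstruction inherited from the classical Collatz problem.

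First I would unpack the phrase ``vertices to be $0$ on at least one axis.'' For an $n$-tuple $Y = (y_1,\dots,y_n)$, the vertex set of the dual object is $S = (Y, R(Y), \dots, R^{n-1}(Y))$, and every vertex is a cyclic permutation of the coordinates of $Y$. Hence the multiset of coordinate values appearing among all vertices is exactly $\{y_1,\dots,y_n\}$ (with multiplicity), independently of which rotation is chosen. In particular, every vertex has a coordinate equal to $0$ on some axis if and only if some entry $y_i = 0$, and this is in turn equivalent to merely some vertex having a zero coordinate. The geometric statement therefore collapses entirely to a statement about entries of $Y$.

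Next I would invoke Theorem 2 to confirm that the coordinate-level reformulation really is the same statement as Conjecture 2. Theorem 2 guarantees that every $R^k(Y)$ satisfies the LCF whenever $Y$ does, so every vertex of the dual object is itself a genuine satisfying tuple rather than an artefact of the coordinate embedding. Excluding the exceptional point $(2,2,\dots,2)$ on both sides, the assertion ``every dual object of a satisfying cycle has all vertices lying on some coordinate hyperplane'' becomes word-for-word ``every satisfying tuple other than $(2,2,\dots,2)$ contains a $0$ entry,'' which is Conjecture 2.

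The main obstacle is that this equivalence is only the easy half of the problem. What remains is to rule out satisfying tuples whose entries are all strictly positive and not uniformly equal to $2$. By the definition of the entries given in Theorem 1, such a tuple corresponds to a cycle in the Loosened Collatz Graph in which $f$ is applied at least once between consecutive applications of $g$, i.e., a cycle that sits entirely inside the original Collatz graph and is distinct from the trivial one. Ruling this out is at least as strong as the no-non-trivial-cycles half of the Collatz Conjecture itself, so no purely geometric feature of the dual polygon seems likely to yield leverage: one would need to import genuinely number-theoretic information about when $2^{\sum_i y_i} - 3^n$ divides the numerator of $F_L(Y)$, and that is precisely where the classical problem is stuck.
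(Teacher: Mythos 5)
This statement is a conjecture, not a theorem: the paper offers no proof of it and merely asserts that it is ``an equivalent conjecture to Conjecture 2.'' Your reduction --- observing that every vertex of the dual object is a cyclic rotation of $Y$, so that ``every vertex has a zero coordinate'' collapses to ``some entry $y_i = 0$,'' which is exactly Conjecture 2 --- makes that equivalence explicit and correct, and you rightly stop there, since the remaining content is the open problem itself.
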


We also conjecture that this object is invariant under some rotation about the line $x_1=x_2= \dots = x_n$, where $x_i$ corresponds to the $i$th axis.

If either Conjecture 2 or Conjecture 4 are true, then there exist no cycles in the original Collatz graph aside from the trivial 1,2,4 cycle. Of course, there is currently no use for this observation, but perhaps those with greater knowledge and a more varied skill set may use Theorem 5 to investigate the conjectures. 

\section{Conclusion}

We have investigated the Loosened Collatz Graph, derived the LCF and shown how they relate to tuples satisfying the LCF, which represent circuits. We have shown the relationship between a tuple and its bitwise rotations and the satisfaction of a tuple, given its bitwise rotations, and vice versa, suggesting a possible normal form for such satisfying tuples. We have proven that the set of tuples which produce the positive integer $k$ in the LCF form a unique factorisation monoid, which may help aid future researchers into proving properties of the atoms in the monoid and the structure of tuples in the set. We have suggested an interpretation of tuples as coordinates, which may provide geometric insights into the problem, though we currently lack the knowledge to explore this path. Future researchers may be interested in proving or disproving the three main conjectures of the paper:
\begin{enumerate}
    \item For all $n \neq 0 \text{ (mod 3)} \in \mathbb{N}_1$, $n$ is a vertex of a cycle.
    \item Excluding the tuple $(2,2,...)$, a tuple satisfying the Loosened Collatz Function must include $0$ in its entry.
\end{enumerate}
\section*{Acknowledgements}

We owe thanks to Maiesha Siddika, Owen Mackenzie, Akira Wan, Emils Bahanovskis, Daniel Espinoza, Tervel Valchanov and others at King's College London School of Mathematics for useful discussion at the beginning of research. We also thank Layo Danbury, Sophia Gregorio, Na Wang, Saanya Verma and others for their support during our research. We are grateful for the help Edward Smith's father provided us with uploading the necessary files to GitHub and collecting the results for Section 3.2.
We are grateful for the services provided by King's College London School of Mathematics, as without, we would not have been able to perform our research.

\bibliography{main}

\end{document}